\newtheorem{theorem}{Theorem}
\newtheorem{lemma}[theorem]{Lemma}
\journal{a journal}
\begin{document}

\begin{frontmatter}




\author[label1]{J.C. Jimenez}

\title{Simplified formulas for the mean and variance of linear stochastic
differential equations}

\address[label1]{Instituto de Cibernetica, Matematica y Fisica, Calle 15,
No. 551, entre C y D, Vedado, La Habana, Cuba. e-mail: (see
http://sites.google.com/site/locallinearization/)}

\begin{abstract}
Explicit formulas for the mean and variance of the solutions of
linear stochastic differential equations are derived in terms of an
exponential matrix. This result improved a previous one by means of
which the mean and variance are expressed in terms of a linear
combination of higher dimensional exponential matrices. The
important role of the new formulas for the system identification as
well as numerical algorithms for their practical implementation are
pointed out.
\end{abstract}

\begin{keyword}
Stochastic Differential Equations, System Identification, Local
Linearization Filter
\end{keyword}

\end{frontmatter}

\section{Introduction}

Linear Stochastic Differential Equations (SDEs) define one of the
more simple class of equations frequently used for modeling a
variety of random phenomena. Since long time ago, they have been the
focus of intensive researches resulting in a broad and deep
knowledge of the properties of their solutions. Among these
properties, the mean and variance of the solutions have been well
studied. Specifically, the Ordinary Differential Equations (ODEs)
that describe the dynamics of the mean and variance are well know
(see, e.g., \cite{Arnold 1974,Khasminskii12}). However, since the
explicit solutions of these ODEs were rarely available, numerical
solutions were required during some time. Typically, these
approximate solutions were computed by means of a numerical
integrator for the differential equations or by a numerical
quadrature applied to the integral representation of the mean and
variance (see, e.g., \cite{Mora 2005, Jimenez98 AML}). Later, in
addition to the explicit formulas for the mean and variance of the
scalar linear SDEs and for the multidimensional linear SDEs with
diagonal drift and diffusion coefficients, explicit formulas could
be obtained as well for multidimensional linear SDEs with additive
noise. Indeed, by using the main result of \cite{Van Loan 1978}, the
mean and variance of these equations could be expressed in terms of
exponential matrices. The Padé method proposed in \cite{Van Loan
1978} for the numerical calculation of such exponential matrix is
the more efficient and accurate algorithm currently available for
the computation of the mean and variance of linear SDEs with
additive (see, e.g., \cite{Grewal08,Carbonell06 JCAM}). More
recently, in \cite{Jimenez02 SCL,Jimenez03 IJC}, explicit formulas
for the mean and variance of linear SDEs with multiplicative and/or
additive noises were derived in terms of a linear combination of
seven exponential matrices. The formulas were obtained as solution
of the mentioned ODEs for the mean and variance by using the main
result of \cite{Van Loan 1978} as well. Apart from being of
mathematical interest, these explicit formulas have played a crucial
role in the practical implementation of suboptimal linear filters
\cite{Jimenez02 SCL}, Local Linearization filters \cite{Jimenez03
IJC} and approximate Innovation estimators \cite{Jimenez06 JTSA} for
the identification of continuous-discrete state space models. In a
variety of applications, these methods have shown high effectiveness
and efficiency for the estimation of unobserved components and
unknown parameters of SDEs given a set of discrete observations.
Remarkable is the identification, from actual data, of
neurophysiological, financial and molecular models among others
(see, e.g., \cite{Calderon09,Chiarella09,Jimenez06
APFM,Riera04,Valdes99}). Therefore, a simplification of the formulas
for the mean and variance of linear SDEs might imply a sensible
reduction of the computational cost of the mentioned system
identification methods and, consequently, a positive impact in
applications.

In this paper, simplified explicit formulas for the mean and
variance of linear SDEs are obtained in terms of just one
exponential matrix of lower dimensionality. The formulas are derived
from the solution of the ODEs that describe the evolution of the
mean and the second moment of the SDEs. The variance is then
obtained from the well-known formula that involves the first two
moments. The computational benefits of the simplified formulas are
pointed out.

\section{Notation and Preliminaries}

Let us consider the $d$-dimensional linear stochastic differential equation%
\begin{equation}
d\mathbf{x}(t)=(\mathbf{Ax}(t)+\mathbf{a(}t))dt+\sum\limits_{i=1}%
^{m}(\mathbf{B}_{i}\mathbf{x}(t)+\mathbf{b}_{i}(t))d\mathbf{w}^{i}(t)\text{
}
\label{SS1}%
\end{equation}
for all $t\in\lbrack t_{0},T]$, where $\mathbf{w=(\mathbf{w}}^{1}%
,\ldots,\mathbf{w}^{m}\mathbf{)}$ is an $m$-dimensional standard
Wiener process, $\mathbf{A}$ and $\mathbf{B}_{i}$ are $d\times d$
matrices, and
$\mathbf{a}(t)=\mathbf{a}_{0}+\mathbf{a}_{1}t$ and $\mathbf{b}_{i}%
(t)=\mathbf{b}_{i,0}+\mathbf{b}_{i,1}t$ are $d$-dimensional vectors.
Suppose that there exist the first two moments of $\mathbf{x}$ for
all $t\in\lbrack t_{0},T]$.

The ordinary differential equations for the $d$-dimensional vector
mean $\mathbf{m}_{t}=E(\mathbf{x}(t))$ and the $d\times d$ matrix
second moment
$\mathbf{P}_{t}=E(\mathbf{x}(t)\mathbf{x}^{\intercal}(t))$ of
$\mathbf{x}(t)$ are \cite{Kloeden 1995}
\[
\frac{d\mathbf{m}_{t}}{dt}=\mathbf{Am}_{t}+\mathbf{a}(t)
\]
and%
\[
\frac{d\mathbf{P}_{t}}{dt}=\mathbf{AP}_{t}+\mathbf{P}_{t}\mathbf{A}%
^{\intercal}+\sum\limits_{i=1}^{m}\mathbf{B}_{i}\mathbf{P}_{t}\mathbf{B}%
_{i}^{\intercal}+\mathcal{B}(t),
\]
where
\begin{equation}
\mathcal{B}(t)=\mathbf{a}(t)\mathbf{m}_{t}^{\intercal}+\mathbf{m}%
_{t}\mathbf{a}^{\intercal}(t)+\sum\limits_{i=1}^{m}\mathbf{B}_{i}%
\mathbf{m}_{t}\mathbf{b}_{i}^{\intercal}(t)+\mathbf{b}_{i}\mathbf{m}%
_{t}^{\intercal}\mathbf{B}_{i}^{\intercal}(t)+\mathbf{b}_{i}(t)\mathbf{b}%
_{i}^{\intercal}(t). \label{beta}%
\end{equation}

The solution of these equations can be written as \cite{Jimenez02
SCL,Jimenez05 AMC}
\begin{equation}
\mathbf{m}_{t}=\mathbf{m}_{0}+\mathbf{Le}^{\mathbf{C}(t-t_{0})}\mathbf{r}
\label{solucion m}%
\end{equation}
and
\begin{equation}
vec(\mathbf{P}_{t})=e^{\mathcal{A}(t-t_{0})}(vec(\mathbf{P}_{0})+%
{\displaystyle\int\limits_{0}^{t-t_{0}}}
e^{-\mathcal{A}s}vec(\mathcal{B}(s+t_{0}))ds), \label{solucion P}%
\end{equation}
where $\mathbf{m}_{0}=E(\mathbf{x}(t_{0}))$ and $\mathbf{P}_{0}=E(\mathbf{x}%
(t_{0})\mathbf{x}^{\intercal}(t_{0}))$ are the first two moments of
$\mathbf{x}$ at $t_{0}$, and the matrices $\mathbf{C}$, $\mathbf{L}$
and $\mathbf{r}$ are defined as
\begin{equation}
\mathbf{C}=\left[
\begin{array}
[c]{ccc}%
\mathbf{A} & \mathbf{a}_{1} & \mathbf{Am}_{0}+\mathbf{a}(t_{0})\\
0 & 0 & 1\\
0 & 0 & 0
\end{array}
\right]  \in\Re^{(d+2)\times(d+2)}, \label{Matrix C noauto}%
\end{equation}
$\mathbf{L}=\left[
\begin{array}
[c]{ll}%
\mathbf{I}_{d} & \mathbf{0}_{d\times2}%
\end{array}
\right]  $ and $\mathbf{r}^{\intercal}=\left[
\begin{array}
[c]{ll}%
\mathbf{0}_{1\times(d+1)} & 1
\end{array}
\right]  $ for non-autonomous equations (i.e., with non zero $\mathbf{a}_{1}%
$\textbf{, }$\mathbf{b}_{i,1}$); and as
\begin{equation}
\mathbf{C}=\left[
\begin{array}
[c]{cc}%
\mathbf{A} & \mathbf{Am}_{0}+\mathbf{a}(t_{0})\\
0 & 0
\end{array}
\right]  \in\Re^{(d+1)\times(d+1)}, \label{Matrix  C auto}%
\end{equation}
$\mathbf{L}=\left[
\begin{array}
[c]{ll}%
\mathbf{I}_{d} & \mathbf{0}_{d\times1}%
\end{array}
\right]  $ and $\mathbf{r}^{\intercal}=\left[
\begin{array}
[c]{ll}%
\mathbf{0}_{1\times d} & 1
\end{array}
\right]  $ for autonomous equations (i.e., with
$\mathbf{a}_{1}=$\textbf{ }$\mathbf{b}_{i,1}=\mathbf{0}$). Here,
\begin{equation}
\mathcal{A}=\mathbf{A}\oplus\mathbf{A}+\sum\limits_{i=1}^{m}\mathbf{B}%
_{i}\mathbf{\otimes B}_{i}^{\intercal} \label{Matrix A}%
\end{equation}
is a $d^{2}\times d^{2}$\ matrix, and $\mathbf{I}_{d}$ is the
$d$-dimensional identity matrix. The symbols $vec$, $\oplus$ and
$\otimes$ denote the vectorization operator, the Kronecker sum and
product, respectively.

The following lemma provides simple expressions to computing
multiple integrals involving matrix exponentials such those
appearing in (\ref{solucion P}).

\begin{lemma}
\label{Lemma VanLoan}(\cite{Van Loan 1978}) Let $\mathbf{A}_{1}$,
$\mathbf{A}_{2}$, $\mathbf{A}_{3}$ and $\mathbf{A}_{4}$ be square
matrices, $n_{1}$, $n_{2}$, $n_{3}$ and $n_{4}$ be positive
integers, and set $m$ to be their sum. If the $m\times m$ block
triangular matrix $\mathbf{M}$ is defined by
\[
\mathbf{M}=\left[
\begin{array}
[c]{cccc}%
\mathbf{A}_{1} & \mathbf{B}_{1} & \mathbf{C}_{1} & \mathbf{D}_{1}\\
0 & \mathbf{A}_{2} & \mathbf{B}_{2} & \mathbf{C}_{2}\\
0 & 0 & \mathbf{A}_{3} & \mathbf{B}_{3}\\
0 & 0 & 0 & \mathbf{A}_{4}%
\end{array}
\right]
\begin{array}
[c]{l}%
\}\text{ }n_{1}\\
\}\text{ }n_{2}\\
\}\text{ }n_{3}\\
\}\text{ }n_{4}%
\end{array}
,
\]
then for $s\geq0$%
\[
\left[
\begin{array}
[c]{cccc}%
\mathbf{F}_{1}\mathbf{(}s\mathbf{)} & \text{ }\mathbf{G}_{1}\mathbf{(}%
s\mathbf{)} & \mathbf{H}_{1}\mathbf{(}s\mathbf{)} & \mathbf{K}_{1}(s)\\
0 & \mathbf{F}_{2}\mathbf{(}s\mathbf{)} &
\mathbf{G}_{2}\mathbf{(}s\mathbf{)}
& \mathbf{H}_{2}\mathbf{(}s\mathbf{)}\\
0 & 0 & \mathbf{F}_{3}\mathbf{(}s\mathbf{)} & \mathbf{G}_{3}\mathbf{(}%
s\mathbf{)}\\
0 & 0 & 0 & \mathbf{F}_{4}\mathbf{(}s\mathbf{)}%
\end{array}
\right]  =\exp(s\mathbf{M}),
\]
where
\begin{align*}
\mathbf{F}_{j}(s)  &  \equiv\exp(\mathbf{A}_{j}s),\text{ for }j=1,2,3,4\\
\mathbf{G}_{j}(s)  &  \equiv\int\limits_{0}^{s}\exp(\mathbf{A}_{j}%
(s-u))\mathbf{B}_{j}\exp(\mathbf{A}_{j+1}u)du\text{, for }j=1,2,3\\
\mathbf{H}_{j}(s)  &  \equiv\int\limits_{0}^{s}\exp(\mathbf{A}_{j}%
(s-u))\mathbf{C}_{j}\exp(\mathbf{A}_{j+2}u)du\\
&  +\int\limits_{0}^{s}\int\limits_{0}^{u}\exp(\mathbf{A}_{j}(s-u))\mathbf{B}%
_{j}\exp(\mathbf{A}_{j+1}(u-r))\mathbf{B}_{j+1}\exp(\mathbf{A}_{j+2}r)drdu\\
\mathbf{K}_{1}(s)  &  \equiv\int\limits_{0}^{s}\exp(\mathbf{A}_{1}%
(s-u))\mathbf{D}_{1}\exp(\mathbf{A}_{4}u)du\\
&  +\int\limits_{0}^{s}\int\limits_{0}^{u}\exp(\mathbf{A}_{1}(s-u))[\mathbf{C}%
_{1}\exp(\mathbf{A}_{3}(u-r))\mathbf{B}_{3}+\mathbf{B}_{1}\exp(\mathbf{A}%
_{2}(u-r))\mathbf{C}_{2}]\exp(\mathbf{A}_{4}r)drdu.\\
&  +\int\limits_{0}^{s}\int\limits_{0}^{u}\int\limits_{0}^{r}\exp
(\mathbf{A}_{1}(s-u))\mathbf{B}_{1}\exp(\mathbf{A}_{2}(u-r))\mathbf{B}_{2}%
\exp(\mathbf{A}_{3}(r-w))\mathbf{B}_{3}\exp(\mathbf{A}_{4}w)dwdrdu.
\end{align*}

\end{lemma}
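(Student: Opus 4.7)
The plan is to differentiate both sides with respect to $s$ and exploit the block upper triangular structure of $\mathbf{M}$, which is inherited by $\exp(s\mathbf{M})$. Setting $\mathbf{E}(s)=\exp(s\mathbf{M})$, one has $\mathbf{E}'(s)=\mathbf{M}\mathbf{E}(s)$ with $\mathbf{E}(0)=\mathbf{I}_m$. Because products of block upper triangular matrices (with the same block sizes $n_1,n_2,n_3,n_4$) remain block upper triangular, every power $\mathbf{M}^k$ has that form, and therefore so does $\mathbf{E}(s)$. I will label its blocks as in the statement and solve for them one super-diagonal at a time, starting from the main diagonal and moving outward.

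First, for the diagonal blocks, the block triangularity gives $\mathbf{F}_j'(s)=\mathbf{A}_j\mathbf{F}_j(s)$ with $\mathbf{F}_j(0)=\mathbf{I}_{n_j}$, so $\mathbf{F}_j(s)=\exp(s\mathbf{A}_j)$. For the first super-diagonal, matching the corresponding blocks in $\mathbf{E}'(s)=\mathbf{M}\mathbf{E}(s)$ yields
\[
\mathbf{G}_j'(s)=\mathbf{A}_j\mathbf{G}_j(s)+\mathbf{B}_j\mathbf{F}_{j+1}(s),\qquad \mathbf{G}_j(0)=0,
\]
whose variation-of-parameters solution is exactly the single integral claimed for $\mathbf{G}_j$.

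Next, the second super-diagonal blocks satisfy
\[
\mathbf{H}_j'(s)=\mathbf{A}_j\mathbf{H}_j(s)+\mathbf{B}_j\mathbf{G}_{j+1}(s)+\mathbf{C}_j\mathbf{F}_{j+2}(s),\qquad \mathbf{H}_j(0)=0.
\]
Applying variation of parameters and substituting the already-known expressions for $\mathbf{G}_{j+1}$ and $\mathbf{F}_{j+2}$ produces the sum of a single and a double integral displayed in the lemma. The same recipe applied to the top-right block gives
\[
\mathbf{K}_1'(s)=\mathbf{A}_1\mathbf{K}_1(s)+\mathbf{B}_1\mathbf{H}_2(s)+\mathbf{C}_1\mathbf{G}_3(s)+\mathbf{D}_1\mathbf{F}_4(s),\qquad \mathbf{K}_1(0)=0,
\]
and substitution of the previously derived formulas for $\mathbf{H}_2$, $\mathbf{G}_3$ and $\mathbf{F}_4$ delivers the three-term expression, with the triple integral arising from the $\mathbf{B}_1\mathbf{H}_2$ contribution.

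The main obstacle is bookkeeping rather than anything conceptual: one must track the cascade of off-diagonal ODEs in the correct order, keep the dummy integration variables consistent when unfolding the nested convolutions, and verify that the resulting iterated integrals carry the precise limits $0\le w\le r\le u\le s$ stated in the lemma. No analytical subtlety intervenes, since everything is finite-dimensional and entire in $s$, so Fubini-type interchanges are automatic.
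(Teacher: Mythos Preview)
Your argument is correct and is essentially the standard proof of Van Loan's result: exploit the block upper triangular form of $\mathbf{M}$ (inherited by $\exp(s\mathbf{M})$), read off the block ODE system from $\mathbf{E}'(s)=\mathbf{M}\mathbf{E}(s)$, and solve it one super-diagonal at a time by variation of parameters. There is nothing to compare, however, because the paper does not supply a proof of this lemma; it is merely quoted from \cite{Van Loan 1978} as a known tool and then generalized (again without proof, by citation to \cite{Carbonell08 JCAM}) in Theorem~\ref{Theorem VanLoan}. Your write-up would serve as a self-contained justification that the paper chose to omit.
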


A generalization of the above lemma for integrals with higher
multiplicity is given by the following theorem.

\begin{theorem} (\cite{Carbonell08 JCAM})
\label{Theorem VanLoan} Let $d_{1},d_{2},...,d_{n},$ be positive
integers. If the
$n\times n$ block triangular matrix $\mathbf{A=}[(\mathbf{A}%
_{lj})]_{l,j=1:n} $ is defined by%
\begin{equation*}
\mathbf{A=}%
\begin{pmatrix}
\mathbf{A}_{11} & \mathbf{A}_{12} & ... & \mathbf{A}_{1n} \\
\mathbf{0} & \mathbf{A}_{22} & ... & \mathbf{A}_{2n} \\
\mathbf{0} & \mathbf{0} & \ddots & \vdots \\
\mathbf{0} & \mathbf{0} & \mathbf{0} & \mathbf{A}_{nn}%
\end{pmatrix}%
,
\end{equation*}%
where $(\mathbf{A}_{lj}),l,j=1,...,n$ are $d_{l}\times d_{j}$
matrices such
that $d_{l}=d_{j}$ for $l=j$. Then for $t\geqslant 0$%
\begin{equation*}
e^{\mathbf{A}t}\mathbf{=}%
\begin{pmatrix}
\mathbf{B}_{11}(t) & \mathbf{B}_{12}(t) & ... & \mathbf{B}_{1n}(t) \\
\mathbf{0} & \mathbf{B}_{22}(t) & ... & \mathbf{B}_{2n}(t) \\
\mathbf{0} & \mathbf{0} & \ddots & \vdots \\
\mathbf{0} & \mathbf{0} & \mathbf{0} & \mathbf{B}_{nn}(t)%
\end{pmatrix}%
,
\end{equation*}%
with%
\begin{eqnarray*}
\mathbf{B}_{ll}(t) &=&e^{\mathbf{A}_{ll}t},l=1,...,n \\
\mathbf{B}_{lj}(t) &=&\int\limits_{0}^{t}\mathbf{M}^{(l,j)}(t,s_{1})ds_{1} \\
&&+\sum\limits_{k=1}^{j-l-1}\int\limits_{0}^{t}\int\limits_{0}^{s_{1}}...%
\int\limits_{0}^{s_{k}}\sum\limits_{l<i_{1}<...<i_{k}<j}\mathbf{M}%
^{(l,i_{1},...,i_{k},j)}(t,s_{1},...,s_{k+1})ds_{k+1}...ds_{1}, \\
l &=&1,...,n-1,\text{ }j=l+1,...,n,
\end{eqnarray*}%
where for any multi-index $(i_{1},...,i_{k})\in
\mathbb{N}
^{k}$ and vector $(s_{1},...,s_{k})\in
\mathbb{R}
^{k}$ the matrices $\mathbf{M}^{(i_{1},...,i_{k})}(s_{1},...,s_{k})$
are
defined by%
\begin{equation*}
\mathbf{M}^{(i_{1},...,i_{k})}(s_{1},...,s_{k})=\left(
\prod\limits_{r=1}^{k-1}e^{\mathbf{A}_{i_{r}i_{r}}(s_{r}-s_{r+1})}\mathbf{A}%
_{i_{r}i_{r+1}}\right) e^{\mathbf{A}_{i_{k}i_{k}}s_{k}}.
\end{equation*}
\end{theorem}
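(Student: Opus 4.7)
The plan is to establish the formula by Picard-iterating the Duhamel (variation of constants) representation of $e^{\mathbf{A}t}$. I first decompose $\mathbf{A}=\mathbf{D}+\mathbf{N}$, where $\mathbf{D}$ is the block-diagonal part (with blocks $\mathbf{A}_{ll}$) and $\mathbf{N}$ is the strictly block-upper-triangular part (with blocks $\mathbf{A}_{lj}$, $l<j$). Then $e^{\mathbf{D}t}$ is block diagonal with diagonal blocks $e^{\mathbf{A}_{ll}t}$, which already yields the claimed formula for $\mathbf{B}_{ll}(t)$ as well as the block-upper-triangular structure of $e^{\mathbf{A}t}$.

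Writing $Y(t)=e^{\mathbf{A}t}$, the equation $Y'=(\mathbf{D}+\mathbf{N})Y$, $Y(0)=\mathbf{I}$, gives via Duhamel's formula $Y(t)=e^{\mathbf{D}t}+\int_0^t e^{\mathbf{D}(t-s)}\mathbf{N}\,Y(s)\,ds$. Iterating this identity yields the Dyson series
\[
Y(t)=e^{\mathbf{D}t}+\sum_{K\geq 1}\int_0^t\!\!\int_0^{s_1}\!\!\cdots\!\!\int_0^{s_{K-1}} e^{\mathbf{D}(t-s_1)}\mathbf{N}\,e^{\mathbf{D}(s_1-s_2)}\mathbf{N}\cdots\mathbf{N}\,e^{\mathbf{D}s_K}\,ds_K\cdots ds_1.
\]
Because $\mathbf{N}$ is strictly block-upper-triangular with $n$ diagonal blocks, it is nilpotent ($\mathbf{N}^{n}=\mathbf{0}$), so the sum terminates after at most $n-1$ nonzero terms and no convergence issue arises.

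Next I would extract the $(l,j)$ block of this expansion. Since $e^{\mathbf{D}\cdot}$ is block diagonal and the nonzero blocks of $\mathbf{N}$ lie only strictly above the diagonal, the $(l,j)$ block of the $K$-fold integrand equals the sum over strictly increasing chains $l=p_0<p_1<\cdots<p_K=j$ of
\[
e^{\mathbf{A}_{ll}(t-s_1)}\mathbf{A}_{l,p_1}e^{\mathbf{A}_{p_1 p_1}(s_1-s_2)}\mathbf{A}_{p_1,p_2}\cdots \mathbf{A}_{p_{K-1},j}\,e^{\mathbf{A}_{jj}s_K}.
\]
For $K=1$ there are no intermediate indices and the integrand reduces to $\mathbf{M}^{(l,j)}(t,s_1)$, yielding the first term $\int_0^t \mathbf{M}^{(l,j)}(t,s_1)\,ds_1$. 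For $K\geq 2$, writing $k=K-1$ and renaming the intermediate indices $(p_1,\ldots,p_{K-1})$ as $(i_1,\ldots,i_k)$, the integrand matches $\mathbf{M}^{(l,i_1,\ldots,i_k,j)}(t,s_1,\ldots,s_{k+1})$ by direct comparison with the defining formula for $\mathbf{M}$, and $k$ ranges over $1,\ldots,j-l-1$ because the longest strictly increasing chain between $l$ and $j$ has $j-l+1$ entries.

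The main obstacle is purely combinatorial bookkeeping: expanding the alternating product of block-diagonal exponentials with a strictly block-upper-triangular matrix into its chain-of-indices form, and then verifying that the range of chains, the simplex of time variables, and the placement of exponential and coupling factors all line up precisely with the notation $\mathbf{M}^{(l,i_1,\ldots,i_k,j)}$. The analytic ingredients — Duhamel, nilpotency, and termination of the Picard iteration — are entirely standard; everything delicate is in the indexing.
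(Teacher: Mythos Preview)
Your argument is correct. The decomposition $\mathbf{A}=\mathbf{D}+\mathbf{N}$, the Duhamel iteration leading to the Dyson expansion, the termination via the strictly block-upper-triangular structure of $\mathbf{N}$, and the identification of the $(l,j)$ block with the sum over strictly increasing chains are all sound; the matching with the definition of $\mathbf{M}^{(l,i_1,\ldots,i_k,j)}$ is exactly as you describe. One small wording issue: the termination is not literally ``$\mathbf{N}^n=\mathbf{0}$'' applied to the Dyson term, since the factors are interleaved with block-diagonal exponentials; but since $e^{\mathbf{D}\cdot}\mathbf{N}$ is again strictly block-upper-triangular, a product of $K$ such factors vanishes for $K\geq n$, which is the statement you actually need (and which you implicitly recover in your chain-length count).

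As for comparison with the paper: the present paper does \emph{not} prove this theorem. It is quoted verbatim from \cite{Carbonell08 JCAM} and used as a tool in the proofs of Theorems~\ref{Theorem MV1}--\ref{Theorem MV3}; no argument is supplied here. So there is no ``paper's own proof'' to compare against, and your self-contained derivation via the Dyson series stands on its own.
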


\section{Simplified formulas for the first two moments}

In this section simplified formulas for the first two moments of the
linear SDE (\ref{SS1}) and two of their special forms are derived.
Equations with multiplicative and additive noises as well autonomous
and nonautonomous equations are distinguished.

\subsection{Equations with multiplicative and/or additive noises}

\begin{lemma}
\label{Lemma beta}%
\begin{equation}
vec(\mathcal{B}(s+t_{0}))=\mathcal{B}_{1}+\mathcal{B}_{2}s+\mathcal{B}%
_{3}s^{2}+\mathcal{B}_{4}\mathbf{e}^{\mathbf{C}s}\mathbf{r}+s\mathcal{B}%
_{5}\mathbf{e}^{\mathbf{C}s}\mathbf{r,} \label{vec beta}%
\end{equation}
for all $s\geq0$, where the vectors $\mathcal{B}_{1}=vec(\mathbf{\beta}%
_{1})+\beta_{4}\mathbf{m}_{0}$, $\mathcal{B}_{2}=vec(\mathbf{\beta}%
_{2})+\mathbf{\beta}_{5}\mathbf{m}_{0}$ and
$\mathcal{B}_{3}=vec(\mathbf{\beta }_{3})$, and the matrices
$\mathcal{B}_{4}=\mathbf{\beta}_{4}\mathbf{L}$ and
$\mathcal{B}_{5}=\mathbf{\beta}_{5}\mathbf{L}$ are defined in terms
of the\ matrices
\begin{align*}
\mathbf{\beta}_{1}  &  =\sum\limits_{i=1}^{m}(\mathbf{b}_{i,0}+\mathbf{b}%
_{i,1}t_{0})(\mathbf{b}_{i,0}+\mathbf{b}_{i,1}t_{0})^{\intercal}\\
\mathbf{\beta}_{2}  &  =\sum\limits_{i=1}^{m}(\mathbf{b}_{i,0}+\mathbf{b}%
_{i,1}t_{0})\mathbf{b}_{i,1}^{\intercal}+\mathbf{b}_{i,1}(\mathbf{b}%
_{i,0}+\mathbf{b}_{i,1}t_{0})^{\intercal}\\
\mathbf{\beta}_{3}  &  =\sum\limits_{i=1}^{m}\mathbf{b}_{i,1}\mathbf{b}%
_{i,1}^{\intercal}\\
\mathbf{\beta}_{4}  &  =(\mathbf{a}_{0}+\mathbf{a}_{1}t_{0})\oplus
(\mathbf{a}_{0}+\mathbf{a}_{1}t_{0})+\sum\limits_{i=1}^{m}(\mathbf{b}%
_{i,0}+\mathbf{b}_{i,1}t_{0})\otimes\mathbf{B}_{i}+\mathbf{B}_{i}%
\otimes(\mathbf{b}_{i,0}+\mathbf{b}_{i,1}t_{0})\\
\mathbf{\beta}_{5}  &  =\mathbf{a}_{1}\oplus\mathbf{a}_{1}+\sum\limits_{i=1}%
^{m}\mathbf{b}_{i,1}\otimes\mathbf{B}_{i}+\mathbf{B}_{i}\otimes\mathbf{b}%
_{i,1}.
\end{align*}

\end{lemma}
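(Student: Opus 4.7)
The plan is to expand $\mathcal{B}(s+t_0)$ directly from its definition (\ref{beta}), separate the contributions by whether they involve $\mathbf{m}_{s+t_0}$ or not, and then substitute the explicit representation (\ref{solucion m}) for the mean.

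First I would use the affinity of the coefficients to write
\[
\mathbf{a}(s+t_0)=(\mathbf{a}_0+\mathbf{a}_1 t_0)+\mathbf{a}_1 s,\qquad
\mathbf{b}_i(s+t_0)=(\mathbf{b}_{i,0}+\mathbf{b}_{i,1}t_0)+\mathbf{b}_{i,1}s,
\]
and substitute in (\ref{beta}). The diffusion-only piece $\sum_{i}\mathbf{b}_i(s+t_0)\mathbf{b}_i^{\intercal}(s+t_0)$ is independent of $\mathbf{m}_{s+t_0}$ and expands as the matrix polynomial $\mathbf{\beta}_1+\mathbf{\beta}_2 s+\mathbf{\beta}_3 s^2$. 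Applying $vec$ to it contributes $vec(\mathbf{\beta}_1)+vec(\mathbf{\beta}_2)s+vec(\mathbf{\beta}_3)s^2$ to the claimed identity.

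For the remaining terms, which are bilinear in the coefficients and $\mathbf{m}_{s+t_0}$, I would repeatedly use the identities $vec(\mathbf{u}\mathbf{v}^{\intercal})=\mathbf{v}\otimes\mathbf{u}$, $vec(\mathbf{B}\mathbf{m}\mathbf{b}^{\intercal})=(\mathbf{b}\otimes\mathbf{B})\mathbf{m}$ and $vec(\mathbf{b}\mathbf{m}^{\intercal}\mathbf{B}^{\intercal})=(\mathbf{B}\otimes\mathbf{b})\mathbf{m}$, together with the convention $\mathbf{a}\oplus\mathbf{a}=\mathbf{a}\otimes\mathbf{I}_d+\mathbf{I}_d\otimes\mathbf{a}$ that makes $\mathbf{m}\otimes\mathbf{a}+\mathbf{a}\otimes\mathbf{m}=(\mathbf{a}\oplus\mathbf{a})\mathbf{m}$. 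Grouping by powers of $s$, the $s^0$ coefficient multiplying $\mathbf{m}_{s+t_0}$ collects exactly into $\mathbf{\beta}_4$ and the $s^1$ coefficient into $\mathbf{\beta}_5$, so the whole $\mathbf{m}$-dependent part equals $(\mathbf{\beta}_4+s\mathbf{\beta}_5)\mathbf{m}_{s+t_0}$.

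The last step is to insert $\mathbf{m}_{s+t_0}=\mathbf{m}_0+\mathbf{L}e^{\mathbf{C}s}\mathbf{r}$ from (\ref{solucion m}). This splits $(\mathbf{\beta}_4+s\mathbf{\beta}_5)\mathbf{m}_{s+t_0}$ into four contributions $\mathbf{\beta}_4\mathbf{m}_0+s\mathbf{\beta}_5\mathbf{m}_0+\mathbf{\beta}_4\mathbf{L}e^{\mathbf{C}s}\mathbf{r}+s\mathbf{\beta}_5\mathbf{L}e^{\mathbf{C}s}\mathbf{r}$. Adding the polynomial piece from the first step and regrouping by the five shape classes (constant in $s$, linear in $s$, quadratic in $s$, multiple of $e^{\mathbf{C}s}\mathbf{r}$, multiple of $se^{\mathbf{C}s}\mathbf{r}$) yields (\ref{vec beta}) with the announced $\mathcal{B}_1,\dots,\mathcal{B}_5$. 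The only genuine obstacle is bookkeeping: tracking the order of factors in each Kronecker product, the vector Kronecker-sum convention, and matching each intermediate term to its final slot; no nontrivial analytic argument is required beyond the affinity of $\mathbf{a}(t)$ and $\mathbf{b}_i(t)$ in $t$ and the standard vec/Kronecker identities.
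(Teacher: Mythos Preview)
Your proposal is correct and follows exactly the approach the paper takes: its proof is the single sentence that the formula ``is directly obtained by substituting (\ref{solucion m}) in (\ref{beta}) with $t=s+t_{0}$.'' You have simply unpacked that substitution in detail, supplying the vec/Kronecker bookkeeping the paper leaves implicit.
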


\begin{proof}
The formula for $vec(\mathcal{B}(s+t_{0}))$ is directly obtained by
substituting (\ref{solucion m}) in (\ref{beta}) with $t=s+t_{0}$.
\end{proof}

The main result of this paper is the following.

\begin{theorem}
\label{Theorem MV1}Let $\mathbf{x}$ be the solution of the linear
SDE (\ref{SS1}) with moments $\mathbf{m}_{0}=E(\mathbf{x}(t_{0}))$
and
$\mathbf{P}_{0}=E(\mathbf{x}(t_{0})\mathbf{x}^{\intercal}(t_{0}))$
at $t_{0}$.
Then, the first two moments of $\mathbf{x}$ can be computed as%
\[
\mathbf{m}_{t}=\mathbf{m}_{0}+\mathbf{L}_{2}e^{\mathbf{M}(t-t_{0})}\mathbf{u}%
\]
and
\[
vec(\mathbf{P}_{t})=\mathbf{L}_{1}e^{\mathbf{M}(t-t_{0})}\mathbf{u}%
\]
for all $t\in\lbrack t_{0},T]$, where the $(d^{2}+2d+7)-$dimensional
vector
$\mathbf{u}$ and the matrices $\mathbf{M}$, $\mathbf{L}_{1}$, $\mathbf{L}_{2}%
$\ are defined as
\[
\mathbf{M}=\left[
\begin{array}
[c]{cccccc}%
\mathcal{A} & \mathcal{B}_{5} & \mathcal{B}_{4} & \mathcal{B}_{3} &
\mathcal{B}_{2} & \mathcal{B}_{1}\\
\mathbf{0} & \mathbf{C} & \mathbf{I}_{d+2} & \mathbf{0} & \mathbf{0}
&
\mathbf{0}\\
\mathbf{0} & \mathbf{0} & \mathbf{C} & \mathbf{0} & \mathbf{0} & \mathbf{0}\\
\mathbf{0} & \mathbf{0} & \mathbf{0} & 0 & 2 & 0\\
\mathbf{0} & \mathbf{0} & \mathbf{0} & 0 & 0 & 1\\
\mathbf{0} & \mathbf{0} & \mathbf{0} & 0 & 0 & 0
\end{array}
\right]  ,\text{ }\mathbf{u}=\left[
\begin{array}
[c]{c}%
vec(\mathbf{P}_{0})\\
\mathbf{0}\\
\mathbf{r}\\
0\\
0\\
1
\end{array}
\right]
\]
and%
\[
\mathbf{L}_{2}=\left[
\begin{array}
[c]{ccc}%
\mathbf{0}_{d\times(d^{2}+d+2)} & \mathbf{I}_{d} & \mathbf{0}_{d\times5}%
\end{array}
\right]  \text{, }\mathbf{L}_{1}=\left[
\begin{array}
[c]{cc}%
\mathbf{I}_{d^{2}} & \mathbf{0}_{d^{2}\times(2d+7)}%
\end{array}
\right]  \text{,}%
\]
with $\mathcal{B}_{i}$ defined as in Lemma \ref{Lemma beta},
$\mathbf{C}$, $\mathbf{r}$ in (\ref{Matrix C noauto}), and
$\mathcal{A}$ in (\ref{Matrix A}).
\end{theorem}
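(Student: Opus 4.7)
The plan is to verify directly that $e^{\mathbf{M}(t-t_0)}\mathbf{u}$ has the required block structure: its first block (of size $d^{2}$) should equal the right-hand side of (\ref{solucion P}), while the first $d$ entries of its third block should equal $\mathbf{L}e^{\mathbf{C}(t-t_0)}\mathbf{r}=\mathbf{m}_t-\mathbf{m}_0$ as in (\ref{solucion m}). Since $\mathbf{M}$ is block upper triangular with diagonal blocks $\mathcal{A},\mathbf{C},\mathbf{C},0,0,0$, Theorem \ref{Theorem VanLoan} expresses each off-diagonal block $\mathbf{B}_{l,j}(t-t_0)$ of the exponential as a finite sum of nested integrals, one for every strictly increasing path from $l$ to $j$ in the index set $\{1,\ldots,6\}$, weighted by the product of the transition entries $\mathbf{M}_{i_r i_{r+1}}$ along the path. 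The whole argument is thus a path enumeration that collapses back onto the single integral in (\ref{solucion P}).

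The identity for $\mathbf{m}_t$ drops out almost for free: because $\mathbf{M}_{3,j}=\mathbf{0}$ for every $j>3$, every increasing path leaving block $3$ meets a zero transition and contributes nothing, so the third component of $e^{\mathbf{M}(t-t_0)}\mathbf{u}$ is simply $e^{\mathbf{C}(t-t_0)}\mathbf{u}_3=e^{\mathbf{C}(t-t_0)}\mathbf{r}$. Applying $\mathbf{L}_2$ extracts its first $d$ coordinates and yields $\mathbf{L}e^{\mathbf{C}(t-t_0)}\mathbf{r}$, which matches (\ref{solucion m}).

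For the first block I would exploit that $\mathbf{u}_2=\mathbf{0}$ and $\mathbf{u}_4=\mathbf{u}_5=0$, so only three columns survive: $j=1$ contributes $e^{\mathcal{A}(t-t_0)}vec(\mathbf{P}_0)$; $j=3$ contributes $\mathbf{B}_{1,3}(t-t_0)\mathbf{r}$; and $j=6$ contributes $\mathbf{B}_{1,6}(t-t_0)$. I would then enumerate the surviving paths. For $\mathbf{B}_{1,3}$ only $1\!\to\!3$ (edge $\mathcal{B}_4$) and $1\!\to\!2\!\to\!3$ (edges $\mathcal{B}_5$ and $\mathbf{I}_{d+2}$) have nonzero transitions; the identity $e^{\mathbf{C}(u-\tau)}e^{\mathbf{C}\tau}=e^{\mathbf{C}u}$ collapses the inner integral in the second path to the scalar $u$, producing exactly the factor $s$ that multiplies $\mathcal{B}_5 e^{\mathbf{C}s}\mathbf{r}$ in (\ref{vec beta}). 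For $\mathbf{B}_{1,6}$ the only surviving paths are $1\!\to\!6$, $1\!\to\!5\!\to\!6$ and $1\!\to\!4\!\to\!5\!\to\!6$; the coefficient $2$ placed in $\mathbf{M}_{4,5}$ is precisely what cancels the $1/2!$ generated by the triple integral along the last path, yielding the $s^{2}$ weight on $\mathcal{B}_3$. All other increasing paths hit a zero entry of $\mathbf{M}$ (e.g.\ $\mathbf{M}_{2,j}=\mathbf{0}$ and $\mathbf{M}_{3,j}=\mathbf{0}$ for $j\geq 4$, and $\mathbf{M}_{4,6}=0$) and so drop out.

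Collecting the surviving contributions yields
\[
e^{\mathcal{A}(t-t_0)}vec(\mathbf{P}_0)+\int_0^{t-t_0}e^{\mathcal{A}(t-t_0-s)}\bigl(\mathcal{B}_1+\mathcal{B}_2 s+\mathcal{B}_3 s^{2}+\mathcal{B}_4 e^{\mathbf{C}s}\mathbf{r}+s\mathcal{B}_5 e^{\mathbf{C}s}\mathbf{r}\bigr)ds,
\]
and by Lemma \ref{Lemma beta} the parenthesized expression is exactly $vec(\mathcal{B}(s+t_0))$, so comparison with (\ref{solucion P}) finishes the proof. The main obstacle is the path bookkeeping in the $6\times 6$ block lattice: one has to confirm that the numerous zero entries of $\mathbf{M}$ kill every path except the five listed above, and that the numerical entries $2,1,1$ in the bottom-right nilpotent block are calibrated so that the polynomial weights produced by the nested integrals are $1,s,s^{2}$ rather than $1,s,s^{2}/2$, so that they fit the coefficients $\mathcal{B}_1,\mathcal{B}_2,\mathcal{B}_3$ coming from Lemma \ref{Lemma beta}.
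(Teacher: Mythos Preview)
Your proposal is correct and follows essentially the same route as the paper's own proof: both substitute the decomposition (\ref{vec beta}) of Lemma~\ref{Lemma beta} into the integral representation (\ref{solucion P}) and then invoke Theorem~\ref{Theorem VanLoan} (the block-triangular matrix-exponential formula) to identify the resulting nested integrals with the appropriate blocks of $e^{\mathbf{M}(t-t_0)}$. The only cosmetic difference is the direction of the argument---the paper first names the integrals $\mathbf{F}_1,\mathbf{H}_1,\mathbf{K}_1$ and then recognises them in $e^{\mathbf{M}(t-t_0)}$, whereas you start from $e^{\mathbf{M}(t-t_0)}\mathbf{u}$ and enumerate the surviving paths explicitly---but the content is identical.
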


\begin{proof}
From (\ref{solucion P}) and (\ref{vec beta}) follows that
\[
vec(\mathbf{P}_{t})=\mathbf{F}_{1}vec(\mathbf{P}_{0})+\mathbf{K}%
_{1}+\mathbf{H}_{1}\mathbf{r}%
\]
where
\[
\mathbf{F}_{1}=e^{\mathcal{A}(t-t_{0})},
\]%
\[
\mathbf{K}_{1}=%
{\displaystyle\int\limits_{0}^{t-t_{0}}}
e^{\mathcal{A}(t-t_{0}-s)}\mathcal{B}_{1}ds+%
{\displaystyle\int\limits_{0}^{t-t_{0}}}
{\displaystyle\int\limits_{0}^{s}}
e^{\mathcal{A}(t-t_{0}-s)}\mathcal{B}_{2}duds+2%
{\displaystyle\int\limits_{0}^{t-t_{0}}}
{\displaystyle\int\limits_{0}^{s}}
{\displaystyle\int\limits_{0}^{u}}
e^{\mathcal{A}(t-t_{0}-s)}\mathcal{B}_{3}drduds
\]
and%
\[
\mathbf{H}_{1}=%
{\displaystyle\int\limits_{0}^{t-t_{0}}}
e^{\mathcal{A}(t-t_{0}-s)}\mathcal{B}_{4}e^{\mathbf{C}s}ds+%
{\displaystyle\int\limits_{0}^{t-t_{0}}}
{\displaystyle\int\limits_{0}^{s}}
e^{\mathcal{A}(t-t_{0}-s)}\mathcal{B}_{5}e^{\mathbf{C}s}duds.
\]
Further, with $\mathbf{F}_{3}=\mathbf{e}^{\mathbf{C}(t-t_{0})}$,
(\ref{solucion m}) can be written as%
\[
\mathbf{m}_{t}=\mathbf{m}_{0}+\mathbf{LF}_{3}\mathbf{r,}%
\]
where the matrices $\mathbf{L}$, $\mathbf{C}$, $\mathbf{r}$ are
defined as in (\ref{Matrix C noauto}). Thus, by a direct application
of Theorem \ref{Theorem VanLoan} (alternatively, Lemma \ref{Lemma
VanLoan} for $\mathbf{F}_{1}$, $\mathbf{F}_{3}$ and $\mathbf{H}_{1}$
can be used by simplicity) follows that
\[
\left[
\begin{array}
[c]{cccccc}%
\mathbf{F}_{1} & - & \mathbf{H}_{1} & - & - & \mathbf{K}_{1}\\
\mathbf{0} & \mathbf{-} & \mathbf{-} & - & - & -\\
\mathbf{0} & \mathbf{0} & \mathbf{F}_{3} & \mathbf{-} & \mathbf{-} &
\mathbf{0}\\
\mathbf{0} & \mathbf{0} & \mathbf{0} & \mathbf{-} & \mathbf{-} & \mathbf{-}\\
\mathbf{0} & \mathbf{0} & \mathbf{0} & \mathbf{0} & \mathbf{-} & \mathbf{-}\\
\mathbf{0} & \mathbf{0} & \mathbf{0} & \mathbf{0} & \mathbf{0} & \mathbf{-}%
\end{array}
\right]  =e^{\mathbf{M}(t-t_{0})}\text{ with\ }\mathbf{M}=\left[
\begin{array}
[c]{cccccc}%
\mathcal{A} & \mathcal{B}_{5} & \mathcal{B}_{4} & \mathcal{B}_{3} &
\mathcal{B}_{2} & \mathcal{B}_{1}\\
\mathbf{0} & \mathbf{C} & \mathbf{I}_{d+2} & \mathbf{0} & \mathbf{0}
&
\mathbf{0}\\
\mathbf{0} & \mathbf{0} & \mathbf{C} & \mathbf{0} & \mathbf{0} & \mathbf{0}\\
\mathbf{0} & \mathbf{0} & \mathbf{0} & 0 & 2 & 0\\
\mathbf{0} & \mathbf{0} & \mathbf{0} & 0 & 0 & 1\\
\mathbf{0} & \mathbf{0} & \mathbf{0} & 0 & 0 & 0
\end{array}
\right]  ,
\]
where $\mathcal{B}_{i}$ are defined as in Lemma \ref{Lemma beta},
and $\mathcal{A}$ in (\ref{Matrix A}). This implies that
\[
\mathbf{m}_{t}=\mathbf{m}_{0}+\mathbf{Lv}_{2}%
\]
and
\[
vec(\mathbf{P}_{t})=\mathbf{v}_{1}\mathbf{,}%
\]
where the $d^{2}$-dimensional vector $\mathbf{v}_{1}$ and the
$d$-dimensional vector $\mathbf{v}_{2}$ are defined as
\[
\left[
\begin{array}
[c]{c}%
\mathbf{v}_{1}\\
\mathbf{-}\\
\mathbf{v}_{2}\\
-\\
-\\
-
\end{array}
\right]  =e^{\mathbf{M}(t-t_{0})}\mathbf{u}\text{ \ with \
}\mathbf{u}=\left[
\begin{array}
[c]{c}%
vec(\mathbf{P}_{0})\\
\mathbf{0}\\
\mathbf{r}\\
0\\
0\\
1
\end{array}
\right]  .
\]
Proof concludes by verifying that
$\mathbf{Lv}_{2}=\mathbf{L}_{2}\mathbf{v}$ and
$\mathbf{v}_{1}=\mathbf{L}_{1}\mathbf{v}$, where
$\mathbf{v}^{\intercal }=[\mathbf{v}_{1},-,\mathbf{v}_{2},---]$,
$\mathbf{L}_{2}=\left[
\begin{array}
[c]{ccc}%
\mathbf{0}_{d\times(d^{2}+d+2)} & \mathbf{L} & \mathbf{0}_{d\times3}%
\end{array}
\right]  $ and $\mathbf{L}_{1}=\left[
\begin{array}
[c]{cc}%
\mathbf{I}_{d^{2}} & \mathbf{0}_{d^{2}\times(2d+7)}%
\end{array}
\right]  $.
\end{proof}

For autonomous equations with multiplicative and/or additive noises
the formulas of the previous theorem can be simplified as follows.

\begin{theorem}
\label{Theorem MV2}Let $\mathbf{x}$ be the solution of the linear
SDE (\ref{SS1}) with $\mathbf{a}_{1}=\mathbf{b}_{i,1}=\mathbf{0}$.
Let
$\mathbf{m}_{0}=E(\mathbf{x}(t_{0}))$ and $\mathbf{P}_{0}=E(\mathbf{x}%
(t_{0})\mathbf{x}^{\intercal}(t_{0}))$ be moments of $\mathbf{x}$ at
$t_{0}$.
Then, the first two moments of $\mathbf{x}$ can be computed as%
\[
\mathbf{m}_{t}=\mathbf{m}_{0}+\mathbf{L}_{2}e^{\mathbf{M}(t-t_{0})}\mathbf{u}%
\]
and
\[
vec(\mathbf{P}_{t})=\mathbf{L}_{1}e^{\mathbf{M}(t-t_{0})}\mathbf{u}%
\]
for all $t\in\lbrack t_{0},T]$, where the $(d^{2}+d+2)-$dimensional\
vector $\mathbf{u}$ and the matrices $\mathbf{M}_{1}$,
$\mathbf{L}_{1}$ are defined as
\[
\mathbf{M}=\left[
\begin{array}
[c]{ccc}%
\mathcal{A} & \mathcal{B}_{1} & \mathcal{B}_{4}\\
\mathbf{0} & 0 & \mathbf{0}\\
\mathbf{0} & \mathbf{0} & \mathbf{C}%
\end{array}
\right]  \text{, \ \ \ }\mathbf{u}=\left[
\begin{array}
[c]{c}%
vec(\mathbf{P}_{0})\\
1\\
\mathbf{r}%
\end{array}
\right]
\]
and%
\[
\mathbf{L}_{2}=\left[
\begin{array}
[c]{ccc}%
\mathbf{0}_{d\times(d^{2}+1)} & \mathbf{I}_{d} & \mathbf{0}_{d\times1}%
\end{array}
\right]  \text{,\ }\mathbf{L}_{1}=\left[
\begin{array}
[c]{cc}%
\mathbf{I}_{d^{2}} & \mathbf{0}_{d^{2}\times(d+2)}%
\end{array}
\right]
\]
with $\mathcal{B}_{1}$ and $\mathcal{B}_{4}$ defined as in Lemma
\ref{Lemma beta}, $\mathbf{C}$, $\mathbf{r}$ in (\ref{Matrix C
auto}), and $\mathcal{A}$ in (\ref{Matrix A}).
\end{theorem}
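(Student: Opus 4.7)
The plan is to imitate the proof of Theorem \ref{Theorem MV1}, but exploit the fact that in the autonomous case $\mathbf{a}_1=\mathbf{b}_{i,1}=\mathbf{0}$ so that several of the ingredient matrices from Lemma \ref{Lemma beta} vanish and the block structure collapses.

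First I would specialize Lemma \ref{Lemma beta} to the autonomous setting. Setting $\mathbf{a}_1=\mathbf{b}_{i,1}=\mathbf{0}$ kills $\boldsymbol{\beta}_2$, $\boldsymbol{\beta}_3$, $\boldsymbol{\beta}_5$, hence $\mathcal{B}_2=\mathcal{B}_3=\mathcal{B}_5=\mathbf{0}$, and equation (\ref{vec beta}) reduces to
\[
vec(\mathcal{B}(s+t_{0}))=\mathcal{B}_{1}+\mathcal{B}_{4}\,e^{\mathbf{C}s}\mathbf{r},
\]
with $\mathbf{C}$ now given by the smaller matrix (\ref{Matrix  C auto}). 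Substituting this into (\ref{solucion P}) yields
\[
vec(\mathbf{P}_{t})=e^{\mathcal{A}(t-t_{0})}vec(\mathbf{P}_{0})+\int_{0}^{t-t_{0}}e^{\mathcal{A}(t-t_{0}-s)}\mathcal{B}_{1}\,ds+\int_{0}^{t-t_{0}}e^{\mathcal{A}(t-t_{0}-s)}\mathcal{B}_{4}\,e^{\mathbf{C}s}\mathbf{r}\,ds,
\]
while (\ref{solucion m}) specializes to $\mathbf{m}_{t}=\mathbf{m}_{0}+\mathbf{L}\,e^{\mathbf{C}(t-t_{0})}\mathbf{r}$.

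Next I would apply Theorem \ref{Theorem VanLoan} to the $3\times 3$ block upper triangular matrix $\mathbf{M}$ defined in the statement, with block sizes $d^{2}$, $1$, and $d+1$. The diagonal blocks exponentiate to $e^{\mathcal{A}(t-t_{0})}$, $1$, and $e^{\mathbf{C}(t-t_{0})}$. The $(1,2)$ and $(1,3)$ off-diagonal blocks pick up exactly the two integrals appearing above: $\mathbf{B}_{12}(t-t_{0})=\int_{0}^{t-t_{0}}e^{\mathcal{A}(t-t_{0}-s)}\mathcal{B}_{1}\,ds$, and, because the $(2,3)$ entry of $\mathbf{M}$ vanishes, the double-integral correction in $\mathbf{B}_{13}$ drops out and we simply get $\mathbf{B}_{13}(t-t_{0})=\int_{0}^{t-t_{0}}e^{\mathcal{A}(t-t_{0}-s)}\mathcal{B}_{4}\,e^{\mathbf{C}s}ds$. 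So $e^{\mathbf{M}(t-t_{0})}$ contains, in its right column acting on $\mathbf{u}=(vec(\mathbf{P}_{0}),1,\mathbf{r})^{\intercal}$, precisely the ingredients needed to assemble $vec(\mathbf{P}_{t})$ in the top block and $e^{\mathbf{C}(t-t_{0})}\mathbf{r}$ in the bottom block.

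Finally I would verify the selectors. Multiplying on the left by $\mathbf{L}_{1}=[\mathbf{I}_{d^{2}},\mathbf{0}]$ extracts the top $d^{2}$ entries of $e^{\mathbf{M}(t-t_{0})}\mathbf{u}$, giving $vec(\mathbf{P}_{t})$ directly. Multiplying by $\mathbf{L}_{2}=[\mathbf{0}_{d\times(d^{2}+1)},\mathbf{I}_{d},\mathbf{0}_{d\times 1}]$ picks out the first $d$ components of the bottom block, i.e.\ $\mathbf{L}\,e^{\mathbf{C}(t-t_{0})}\mathbf{r}$, which is exactly $\mathbf{m}_{t}-\mathbf{m}_{0}$. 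I do not anticipate any real obstacle beyond careful bookkeeping of block dimensions ($d^{2}+1+(d+1)=d^{2}+d+2$) and checking that the autonomous form of $\mathbf{L}$ and $\mathbf{r}$ from (\ref{Matrix  C auto}) is consistent with the definition of $\mathbf{L}_{2}$; the main simplification relative to Theorem \ref{Theorem MV1} is precisely that the polynomial-in-$s$ blocks of $\mathbf{M}$ disappear, so no multi-integrals of order greater than two appear and Theorem \ref{Theorem VanLoan} applies essentially trivially.
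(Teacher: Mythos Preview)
Your proposal is correct and follows essentially the same route as the paper: specialize Lemma~\ref{Lemma beta} to obtain $\mathcal{B}_{2}=\mathcal{B}_{3}=\mathcal{B}_{5}=\mathbf{0}$, insert the resulting expression $vec(\mathcal{B}(s+t_{0}))=\mathcal{B}_{1}+\mathcal{B}_{4}e^{\mathbf{C}s}\mathbf{r}$ into (\ref{solucion P}), and then read off the three needed blocks $\mathbf{F}_{1}$, $\mathbf{G}_{1}$, $\mathbf{H}_{1}$ from $e^{\mathbf{M}(t-t_{0})}$ via a Van Loan--type identity before checking the selectors $\mathbf{L}_{1}$, $\mathbf{L}_{2}$. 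The only cosmetic difference is that the paper invokes Lemma~\ref{Lemma VanLoan} directly (three diagonal blocks suffice), whereas you cite the more general Theorem~\ref{Theorem VanLoan}; either works here.
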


\begin{proof}
Since $\mathbf{a}_{1}=\mathbf{b}_{i,1}=\mathbf{0}$,
$\mathcal{B}_{2}=$ $\mathcal{B}_{3}=$ $\mathcal{B}_{5}=\mathbf{0}$.
Thus, from (\ref{solucion P}) and (\ref{vec beta}) follows that
\[
vec(\mathbf{P}_{t})=\mathbf{F}_{1}vec(\mathbf{P}_{0})+\mathbf{G}%
_{1}+\mathbf{H}_{1}\mathbf{r,}%
\]
where
\[
\mathbf{F}_{1}=e^{\mathcal{A}(t-t_{0})},
\]%
\[
\mathbf{G}_{1}=%
{\displaystyle\int\limits_{0}^{t-t_{0}}}
e^{\mathcal{A}(t-t_{0}-s)}\mathcal{B}_{1}ds
\]
and%
\[
\mathbf{H}_{1}=%
{\displaystyle\int\limits_{0}^{t-t_{0}}}
e^{\mathcal{A}(t-t_{0}-s)}\mathcal{B}_{4}e^{\mathbf{C}s}ds.
\]
By a direct application of Lemma \ref{Lemma VanLoan} follows that%
\[
\mathbf{m}_{t}=\mathbf{m}_{0}+\mathbf{LF}_{3}\mathbf{r}%
\]
and%
\[
vec(\mathbf{P}_{t})=\mathbf{F}_{1}vec(\mathbf{P}_{0})+\mathbf{G}%
_{1}+\mathbf{H}_{1}\mathbf{r,}%
\]
where the matrices $\mathbf{F}_{1}$, $\mathbf{F}_{3}$,
$\mathbf{K}_{1}$, and $\mathbf{H}_{1}$ are defined as
\[
\left[
\begin{array}
[c]{ccc}%
\mathbf{F}_{1} & \mathbf{G}_{1} & \mathbf{H}_{1}\\
\mathbf{0} & \mathbf{-} & \mathbf{-}\\
\mathbf{0} & \mathbf{0} & \mathbf{F}_{3}%
\end{array}
\right]  =e^{\mathbf{M}(t-t_{0})}\text{\ with }\mathbf{M}=\left[
\begin{array}
[c]{ccc}%
\mathcal{A} & \mathcal{B}_{1} & \mathcal{B}_{4}\\
\mathbf{0} & 0 & \mathbf{0}\\
\mathbf{0} & \mathbf{0} & \mathbf{C}%
\end{array}
\right]  ,
\]
with $\mathcal{B}_{1}$ and $\mathcal{B}_{4}$ defined as in Lemma
\ref{Lemma beta}, $\mathbf{C}$, $\mathbf{r}$ in (\ref{Matrix C
auto}), and
$\mathcal{A}$ in (\ref{Matrix A}). This implies that%
\[
\mathbf{m}_{t}=\mathbf{m}_{0}+\mathbf{Lv}_{2}%
\]
and%
\[
vec(\mathbf{P}_{t})=\mathbf{v}_{1}\mathbf{,}%
\]
where the $d^{2}$-dimensional vector $\mathbf{v}_{1}$ and the
$d$-dimensional vector $\mathbf{v}_{2}$ are defined as
\[
\left[
\begin{array}
[c]{c}%
\mathbf{v}_{1}\\
\mathbf{-}\\
\mathbf{v}_{2}%
\end{array}
\right]  =e^{\mathbf{M}(t-t_{0})}\mathbf{u}\text{ \ with\
}\mathbf{u}=\left[
\begin{array}
[c]{c}%
vec(\mathbf{P}_{0})\\
1\\
\mathbf{r}%
\end{array}
\right]  .
\]
Proof concludes by verifying that
$\mathbf{Lv}_{2}=\mathbf{L}_{2}\mathbf{v}$ and
$\mathbf{v}_{1}=\mathbf{L}_{1}\mathbf{v}$, where
$\mathbf{v}^{\intercal }=[\mathbf{v}_{1},-,\mathbf{v}_{2}]$,
$\mathbf{L}_{2}=\left[
\begin{array}
[c]{cc}%
\mathbf{0}_{d\times(d^{2}+1)} & \mathbf{L}%
\end{array}
\right]  $ and $\mathbf{L}_{1}=\left[
\begin{array}
[c]{cc}%
\mathbf{I}_{d^{2}} & \mathbf{0}_{d^{2}\times(d+2)}%
\end{array}
\right]  $.
\end{proof}

\subsection{Equations with additive noise}

For autonomous SDEs with additive noise an additional simplification
of the explicit formulas for the first two moments can be archived.

\begin{theorem}
\label{Theorem MV3}Let $\mathbf{x}$ be the solution of the linear
SDE
(\ref{SS1}) with $\mathbf{B}_{i}=\mathbf{0}$ and $\mathbf{a}_{1}%
=\mathbf{b}_{i,1}=\mathbf{0}$. Let
$\mathbf{m}_{0}=E(\mathbf{x}(t_{0}))$ and
$\mathbf{P}_{0}=E(\mathbf{x}(t_{0})\mathbf{x}^{\intercal}(t_{0}))$
be moments of $\mathbf{x}$ at $t_{0}$. Then, the first two moments
of $\mathbf{x}$ can be
computed as%
\[
\mathbf{m}_{t}=\mathbf{m}_{0}+\mathbf{k}_{1}%
\]
and%
\[
\mathbf{P}_{t}=\mathbf{F}_{1}\mathbf{P}_{0}\mathbf{F}_{1}^{\intercal
}+\mathbf{H}_{1}\mathbf{F}_{1}^{\intercal}+\mathbf{F}_{1}\mathbf{H}%
_{1}^{\intercal}%
\]
for all $t\in\lbrack t_{0},T]$, where the matrices $\mathbf{F}_{1}$,
$\mathbf{H}_{1}$, and $\mathbf{k}_{1}$ are defined as
\[
\left[
\begin{array}
[c]{cccc}%
\mathbf{F}_{1} & \mathbf{-} & \mathbf{H}_{1} & \mathbf{k}_{1}\\
\mathbf{0} & \mathbf{-} & \mathbf{-} & -\\
\mathbf{0} & \mathbf{0} & \mathbf{-} & -\\
\mathbf{0} & \mathbf{0} & \mathbf{0} & -
\end{array}
\right]  =e^{\mathbf{M}(t-t_{0})},
\]
being%
\[
\mathbf{M}=\left[
\begin{array}
[c]{cccc}%
\mathbf{A} & \mathbf{a}_{0} & \mathbf{a}_{0}\mathbf{m}_{0}^{\intercal}%
+\frac{1}{2}\sum\limits_{i=1}^{m}\mathbf{b}_{i,0}\mathbf{b}_{i,0}^{\intercal}
& \mathbf{Am}_{0}+\mathbf{a}_{0}\\
\mathbf{0} & 0 & (\mathbf{Am}_{0}+\mathbf{a}_{0})^{\intercal} & 0\\
\mathbf{0} & \mathbf{0} & -\mathbf{A}^{\intercal} & \mathbf{0}\\
\mathbf{0} & \mathbf{0} & \mathbf{0} & 0
\end{array}
\right]  .
\]
a $(2d+2)\times(2d+2)$ matrix.
\end{theorem}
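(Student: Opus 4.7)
The plan is to apply Lemma \ref{Lemma VanLoan} to the $(2d+2)\times(2d+2)$ block-triangular matrix $\mathbf{M}$ with block sizes $(n_1,n_2,n_3,n_4)=(d,1,d,1)$, identify $\mathbf{F}_1$, $\mathbf{H}_1$, $\mathbf{k}_1$ with the $(1,1)$, $(1,3)$, $(1,4)$ blocks of $e^{\mathbf{M}(t-t_0)}$, and then verify that the formulas in the statement reconstruct the variation-of-constants solutions of the moment ODEs. Because $\mathbf{B}_i=\mathbf{0}$, no vectorisation is needed: the Lyapunov-type equation for $\mathbf{P}_t$ stays in matrix form, with solution
\[
\mathbf{P}_t = e^{\mathbf{A}T}\mathbf{P}_0 e^{\mathbf{A}^{\intercal}T}+\int_0^{T} e^{\mathbf{A}(T-s)}\mathcal{B}(s+t_0)e^{\mathbf{A}^{\intercal}(T-s)}\,ds,\qquad T:=t-t_0,
\]
and $\mathbf{m}_t=e^{\mathbf{A}T}\mathbf{m}_0+\int_0^T e^{\mathbf{A}(T-u)}\mathbf{a}_0\,du$ for the mean.

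For the mean I would read off Lemma \ref{Lemma VanLoan} with $\mathbf{A}_1=\mathbf{A}$, $\mathbf{A}_2=\mathbf{A}_4=0$, $\mathbf{A}_3=-\mathbf{A}^{\intercal}$, $\mathbf{D}_1=\mathbf{A}\mathbf{m}_0+\mathbf{a}_0$, $\mathbf{B}_3=\mathbf{0}$, $\mathbf{C}_2=0$. The last two vanishings kill the double and triple integrals in $\mathbf{K}_1$, leaving the single term $\mathbf{k}_1=\int_0^T e^{\mathbf{A}(T-u)}(\mathbf{A}\mathbf{m}_0+\mathbf{a}_0)\,du$. Using $\int_0^T e^{\mathbf{A}(T-u)}\mathbf{A}\,du=e^{\mathbf{A}T}-\mathbf{I}$ one then obtains $\mathbf{m}_0+\mathbf{k}_1=e^{\mathbf{A}T}\mathbf{m}_0+\int_0^T e^{\mathbf{A}(T-u)}\mathbf{a}_0\,du=\mathbf{m}_t$.

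For the second moment the pivotal trick is the choice $\mathbf{A}_3=-\mathbf{A}^{\intercal}$: multiplying the Van Loan expression for $\mathbf{H}_1$ on the right by $\mathbf{F}_1^{\intercal}=e^{\mathbf{A}^{\intercal}T}$ collapses the tail $e^{-\mathbf{A}^{\intercal}u}$ into $e^{\mathbf{A}^{\intercal}(T-u)}$, producing the two-sided kernel that the Lyapunov integral requires. Adding the transpose $\mathbf{F}_1\mathbf{H}_1^{\intercal}$ symmetrises the kernel from $\mathbf{C}_1$ to $\mathbf{C}_1+\mathbf{C}_1^{\intercal}$; this is what forces the prescription $\mathbf{C}_1=\mathbf{a}_0\mathbf{m}_0^{\intercal}+\tfrac12\sum_i\mathbf{b}_{i,0}\mathbf{b}_{i,0}^{\intercal}$, the factor $\tfrac12$ compensating for double-counting the symmetric noise covariance. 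I would then decompose $\mathbf{m}_{s+t_0}=\mathbf{m}_0+\psi(s)$ with $\psi(s)=\int_0^s e^{\mathbf{A}(s-v)}(\mathbf{A}\mathbf{m}_0+\mathbf{a}_0)\,dv$, split $\mathcal{B}(s+t_0)$ accordingly into a constant-in-$s$ part and a $\psi$-dependent part, and match the former with the symmetrised single integrals and the latter with the symmetrised double integrals generated by $\mathbf{B}_1=\mathbf{a}_0$, $\mathbf{B}_2=(\mathbf{A}\mathbf{m}_0+\mathbf{a}_0)^{\intercal}$, $\mathbf{A}_2=0$.

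The main obstacle is this last matching: one has to check, after applying the semigroup identity $e^{\mathbf{A}^{\intercal}(s-v)}e^{\mathbf{A}^{\intercal}(T-s)}=e^{\mathbf{A}^{\intercal}(T-v)}$ and relabelling the integration variables $(s,v)\leftrightarrow(u,r)$, that the two double integrals produced by Lemma \ref{Lemma VanLoan} coincide exactly with $\int_0^T e^{\mathbf{A}(T-s)}[\mathbf{a}_0\psi(s)^{\intercal}+\psi(s)\mathbf{a}_0^{\intercal}]e^{\mathbf{A}^{\intercal}(T-s)}\,ds$, and that the coefficient $\tfrac12$ in $\mathbf{C}_1$ is the unique one making the symmetrised single integral reproduce the constant part of $\mathcal{B}$ with the correct multiplicity. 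The intermediate blocks of $e^{\mathbf{M}T}$ (the ones marked with dashes in the statement) play no role since $\mathbf{k}_1$ already accounts for the mean and the displayed formula for $\mathbf{P}_t$ uses only $\mathbf{F}_1$ and $\mathbf{H}_1$.
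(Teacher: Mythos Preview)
Your proposal is correct and follows essentially the same route as the paper's proof: both start from the matrix-form variation-of-constants solution of the Lyapunov equation (available here because $\mathbf{B}_i=\mathbf{0}$), split $\mathbf{m}_{s+t_0}=\mathbf{m}_0+\psi(s)$ with $\psi(s)=\int_0^s e^{\mathbf{A}(s-u)}(\mathbf{A}\mathbf{m}_0+\mathbf{a}_0)\,du$, and then invoke Lemma~\ref{Lemma VanLoan} with the block assignments $\mathbf{A}_1=\mathbf{A}$, $\mathbf{A}_2=0$, $\mathbf{A}_3=-\mathbf{A}^{\intercal}$, $\mathbf{A}_4=0$. Your explicit discussion of the symmetrisation $\mathbf{C}_1+\mathbf{C}_1^{\intercal}$ and the role of the factor $\tfrac12$ is a welcome expansion of what the paper leaves implicit in the step ``so $\mathbf{P}_t$ can be rewritten as $\mathbf{F}_1\mathbf{P}_0\mathbf{F}_1^{\intercal}+\mathbf{H}_1\mathbf{F}_1^{\intercal}+\mathbf{F}_1\mathbf{H}_1^{\intercal}$''.
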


\begin{proof}
Thanks to $\mathbf{B}_{i}=\mathbf{0}$, $\mathbf{a}_{1}=\mathbf{b}%
_{i,1}=\mathbf{0}$\textbf{,} the commutation of the matrices $(\mathbf{I}%
\otimes\mathbf{A})$ and $(\mathbf{A}\otimes\mathbf{I})$, and by
using expressions (\ref{solucion P}) and (\ref{beta}) it is obtained
that
\[
\mathbf{P}_{t}=e^{\mathbf{A}(t-t_{0})}\mathbf{P}_{0}e^{\mathbf{A}^{\intercal
}(t-t_{0})}+%
{\displaystyle\int\limits_{0}^{t-t_{0}}}
e^{\mathbf{A}(t-t_{0}-s)}\mathcal{B}(s+t_{0})e^{\mathbf{A}^{\intercal}%
(t-t_{0}-s)}ds,
\]
where now $\mathcal{B}(s+t_{0})$ reduces to
\[
\mathcal{B}(s+t_{0})=\mathbf{a}_{0}\mathbf{m}_{s+t_{0}}^{\intercal}%
+\mathbf{m}_{s+t_{0}}\mathbf{a}_{0}^{\intercal}+\sum\limits_{i=1}%
^{m}\mathbf{b}_{i,0}\mathbf{b}_{i,0}^{\intercal}.
\]
Taking into account that $\mathbf{m}_{s+t_{0}}$ can be rewritten as
\cite{Jimenez02 SCL}%
\[
\mathbf{m}_{s+t_{0}}=\mathbf{m}_{0}+\mathbf{k}_{1},
\]
with%
\[
\mathbf{k}_{1}=%
{\displaystyle\int\limits_{0}^{s}}
e^{\mathbf{A}(s-u)}(\mathbf{Am}_{0}+\mathbf{a}_{0})du
\]
we have that
\[
\mathbf{a}_{0}\mathbf{m}_{s+t_{0}}^{\intercal}=\mathbf{a}_{0}\mathbf{m}%
_{0}^{\intercal}+\mathbf{a}_{0}(\mathbf{Am}_{0}+\mathbf{a}_{0})^{\intercal}%
{\displaystyle\int\limits_{0}^{s}}
e^{\mathbf{A}^{\intercal}(s-u)}du,
\]
and so $\mathbf{P}_{t}$ can be rewritten as%
\[
\mathbf{P}_{t}=\mathbf{F}_{1}\mathbf{P}_{0}\mathbf{F}_{1}^{\intercal
}+\mathbf{H}_{1}\mathbf{F}_{1}^{\intercal}+\mathbf{F}_{1}\mathbf{H}%
_{1}^{\intercal}\mathbf{,}%
\]
where%
\[
\mathbf{F}_{1}=e^{\mathbf{A}(t-t_{0})}%
\]
and%
\begin{align*}
\mathbf{H}_{1}  & =%
{\displaystyle\int\limits_{0}^{t-t_{0}}}
e^{\mathbf{A}(t-t_{0}-s)}(\mathbf{a}_{0}\mathbf{m}_{0}^{\intercal}+\frac{1}%
{2}\sum\limits_{i=1}^{m}\mathbf{b}_{i,0}\mathbf{b}_{i,0}^{\intercal
}\mathcal{)}e^{-\mathbf{A}^{\intercal}s}ds\\
& +%
{\displaystyle\int\limits_{0}^{t-t_{0}}}
e^{\mathbf{A}(t-t_{0}-s)}\mathbf{a}_{0}(\mathbf{Am}_{0}+\mathbf{a}%
_{0})^{\intercal}%
{\displaystyle\int\limits_{0}^{s}}
e^{-\mathbf{A}^{\intercal}u}duds.
\end{align*}
Proof concludes by a direct application of Lemma \ref{Lemma VanLoan}
\end{proof}

Note that Theorem \ref{Theorem MV3} provides an explicit formula for
the first two moments of autonomous linear equations with additive
noise in terms of just one exponential matrix. This new result
complements both, the well known formulas for the mean and variance
of these equations that can be straightforward obtained from Theorem
1 in \cite{Van Loan 1978} (Lemma \ref{Lemma VanLoan} here) in terms
of two exponential matrices of dimensions $d+1$ and $2d$, and the
formulas for the mean and variance obtained in \cite{Carbonell06
JCAM} in terms of just one exponential matrix of dimension $2d+2$.
At a glance, the difference between the new and previous results
seems to be not so large, but this difference becomes very relevant
when the recursive calculation of the second moment (not the
variance) is required (see, e.g. the forthcoming papers
\cite{Jimenez ALL filter,Jimenez AQML,Jimenez AI}).

\section{Computational issues and numerical simulations}

Theorem \ref{Theorem MV1} provides explicit formulas for the first
two moments of the linear SDE (\ref{SS1}) in terms of an exponential
matrix of dimension $d^{2}+2d+7$. By using the well-known expression
\[
var(\mathbf{z})=E(\mathbf{zz}^{\intercal})-E(\mathbf{z})E(\mathbf{z}%
^{\intercal})
\]
for the variance $var(\mathbf{z})$ of a random variable
$\mathbf{z}$, the variance $var(\mathbf{x})$ of $\mathbf{x}$
solution of (\ref{SS1}) can be straightforward computed as
\[
var(\mathbf{x}(t))=\mathbf{P}_{t}-\mathbf{m}_{t}\mathbf{m}_{t}^{\intercal}%
\]
for all $t\in\lbrack t_{0},T]$, where $\mathbf{m}_{t}$ and
$\mathbf{P}_{t}$ are given as in Theorem \ref{Theorem MV1}.

By taking into account that the explicit formulas for the mean and
variance of (\ref{SS1}) obtained in \cite{Jimenez02 SCL,Jimenez03
IJC} involve the computation of seven exponential matrices of
different dimensions up to a maximum of $3d^{2}+4d+4$, it is obvious
the remarkable benefits of the new simplified formulas. From a
computational viewpoint, this includes a considerable reduction of
the computer storage capacity and the computational time required
for their evaluations through the well known Pad\'{e} method
\cite{Moler03, Jimenez11 BIT} for exponential matrices. But, in
addition, the new formulas allow the efficient computation of the
mean and variance of high dimensional systems of the linear SDEs by
means of the Krylov subspace method \cite{Moler03, Jimenez11 BIT}
for exponential matrices, which is crucial in many practical
situations. Other advantage of the exponential form of these
formulas is the flow property the exponential operator, which allows
an extra reduction of the computational time when the mean and
variance of (\ref{SS1}) are required on consecutive time instants
with multiplicity. In this case, the firsts two conditional moments
at the first time instant after the initial one is computed through
the exponential matrix of Theorem \ref{Theorem MV1}, whereas the
others at the remainder times are computed by simple multiplications
of the exponential matrix just mentioned.

Theorems \ref{Theorem MV2} and \ref{Theorem MV3} provide explicit
formulas for the first two moments of autonomous linear SDEs, which
involve an exponential matrix of lower dimensionality: $d^{2}+d+2$
for equation with multiplicative noise, and $2d+2$ for equations
with additive noise. This yields extra advantages in a number of
important applications.\newline%

\begin{tabular}
[c]{|c|c|c|}\hline Type of SDE / dimension & $2$ & $8$\\\hline
\multicolumn{1}{|l|}{Non Autonomous, with Multiplicative Noise} &
\multicolumn{1}{|l|}{$0.457$} &
\multicolumn{1}{|l|}{$0.036$}\\\hline
\multicolumn{1}{|l|}{Autonomous, with Multiplicative Noise} & \multicolumn{1}{|l|}{$0.322$%
} & \multicolumn{1}{|l|}{$0.021$}\\\hline
\multicolumn{1}{|l|}{Autonomous, with Additive Noise} &
\multicolumn{1}{|l|}{$0.072$} &
\multicolumn{1}{|l|}{$0.001$}\\\hline
\end{tabular}

{\small Table I: Relative computational time between the new and old
formulas for the moments of linear SDEs.}\newline

As illustration, the performance of the new and old formulas are
compared for three types of linear equations. In particular, the
equations
\begin{align*}
d\mathbf{x}(t) &  =(-\mathbf{Hx}(t)+\mathbf{1}t)dt+\mathbf{Hx}(t)d\mathbf{w}%
(t),\\
d\mathbf{x}(t) & =-\mathbf{Hx}(t)dt+\mathbf{Hx}(t)d\mathbf{w}(t),
\end{align*}
and%
\[
d\mathbf{x}(t)=-\mathbf{Hx}(t)dt+\mathbf{1}d\mathbf{w}(t)
\]
with $t\in\lbrack0,1]$ and initial conditions
$E(\mathbf{x}(0))=\mathbf{1}$,
$E(\mathbf{x}(0)\mathbf{x}^{\intercal}(0))=\mathbf{11}^{\intercal}$
were considered, where $\mathbf{H}$ denotes the $d\times d$ Hilbert
matrix and $\mathbf{1}$ the $d$-dimensional unit vector. The
formulas of the Theorems \ref{Theorem MV1}, \ref{Theorem MV2} and
\ref{Theorem MV3} were used to compute the moments of the first,
second and third equation, respectively, at $t=1$. With the same
purpose, the formulas of Theorem 3 in \cite{Jimenez03 IJC} were used
for the three SDEs. For equations with dimensions $d=2$ and $d=8$,
Table I presents the relative computational time between the new and
old formulas, which is calculated as the ratio of the CPU time
consumed for these formulas in each equation. Observe as, in all the
cases, the new formulas exhibit a substantial reduction of the
computational cost. As it was expected, this reduction clearly
increases with the dimensionality and the simplicity of the
equation.

Finally, it is worth noting that the simplified formulas derived
here have allowed a computationally efficient implementation of the
approximate filters and estimators recently proposed in
\cite{Jimenez ALL filter,Jimenez AQML,Jimenez AI} for the
identification of diffusion processes from a reduced number of
discrete observations distant in time.

\section{Conclusions}

In this paper, explicit formulas for the mean and variance of linear
stochastic differential equations were derived in terms of an
exponential matrix. With respect to the formulas proposed in a
previous paper the new ones have a number of clear advantages: 1)
they involve the computation of just one exponential matrix of lower
dimensionality; 2) for high dimensional SDEs, they can be
straightforward computed though the Krylov subspace method; 3) for
consecutive time instants with multiplicity, their flow property can
be used; and 4) they reduces to simpler forms for autonomous SDEs
and for equations with additive noise. From numerical viewpoint,
this implies a significant reduction of the computer storage
capacity and the computational
time.\\[0.25in]

\textbf{Acknowledgement: }The author thanks to Prof. A. Yoshimoto
for his invitation to the Institute of Statistical Mathematics,
Japan, where the manuscript was completed.



\bibliographystyle{elsarticle-num}



\end{document}